\theoremstyle{remark}
\newcommand{\ric}{\mathrm{Ric}}
\newtheorem{thm}{Theorem}[section]
\newtheorem{cor}[thm]{Corollary}
\newtheorem{lem}[thm]{Lemma}
\newtheorem{rmk}[thm]{Remark}
\newtheorem{prop}[thm]{Proposition}
\numberwithin{equation}{section}
\begin{document}
\title{Asymptotic curvature estimate for steady solitons}
%    Information for first author
\author{Daoyuan Han}
%    Address of record for the research reported here
\address{Department of Mathematics, Lehigh University, Bethlehem, Pennsylvania 18015}
%    Current address
\curraddr{Department of Mathematics, Lehigh University, Bethlehem, Pennsylvania 18015}
\email{dah517@lehigh.edu}
%    \thanks will become a 1st page footnote.
%\thanks{The first author was supported in part by NSF Grant \#000000.}

%    Information for second author
%\author{Author Two}
%\address{Mathematical Research Section, School of Mathematical Sciences,
%Australian National University, Canberra ACT 2601, Australia}
%\email{two@maths.univ.edu.au}
%\thanks{Support information for the second author.}

%    General info
%\subjclass[2000]{Primary 54C40, 14E20; Secondary 46E25, 20C20}

%\date{}

%\dedicatory{This paper is dedicated to our advisors.}

%\keywords{Differential geometry, algebraic geometry}
\begin{abstract}
In this note, we shall investigate the asymptotic curvature estimate on steady Ricci solitons.
\end{abstract}
\maketitle
%\section*{This is an unnumbered first-level section head}
%This is an example of an unnumbered first-level heading.
%% The correct journal style for \specialsection is all uppercase; a known bug
%% in amsart.cls prevents this, so input must be uppercase until it is fixed.
%\specialsection*{This is a Special Section Head}
%\specialsection*{THIS IS A SPECIAL SECTION HEAD}
%This is an example of a special section head%
%%%%%%%%%%%%%%%%%%%%%%%%%%%%%%%%%%%%%%%%%%%%%%%%%%%%%%%%%%%%%%%%%%%%%%%%
%\footnote{Here is an example of a footnote. Notice that this footnote
%text is running on so that it can stand as an example of how a footnote
%with separate paragraphs should be written.
%\par
%And here is the beginning of the second paragraph.}%
%%%%%%%%%%%%%%%%%%%%%%%%%%%%%%%%%%%%%%%%%%%%%%%%%%%%%%%%%%%%%%%%%%%%%%%%
\section{Main theorems}
\begin{thm}\label{main}
Let $(M,g,f)$ be a complete $n$-dimensional $\kappa$-noncollapsed steady soliton with nonnegative sectional curvature and positive Ricci curvature. Assuming scalar curvature $R(x)\rightarrow 0$ as $d(x)\rightarrow \infty$ uniformly with respect to distance, then 
\begin{equation}
    \liminf_{d(x)\rightarrow \infty} R(x)d(x)^{\alpha}=0,
\end{equation}
for all $\alpha\in (0,1)$, where $d(x)$ is the distance from $x\in M$ to a fixed point $x_0\in M$ and $C$ is a constant independent of $x$. 
\end{thm}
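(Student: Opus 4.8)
The plan is to argue by contradiction, flowing a fixed compact piece of $M$ outward along the gradient of the potential function and comparing the volume it sweeps out against the Bishop--Gromov ceiling. Recall the standard identities for a steady soliton $\mathrm{Ric}=-\nabla^2 f$: the quantity $R+|\nabla f|^2$ is constant, $\nabla R=2\,\mathrm{Ric}(\nabla f)$, and $\Delta f=-R$. That constant is positive (if it were $0$ then $R\equiv0$ and $\nabla f\equiv0$, so $\mathrm{Ric}=-\nabla^2 f=0$, contradicting $\mathrm{Ric}>0$), so after scaling I assume $R+|\nabla f|^2\equiv1$; thus $|\nabla f|\le1$ everywhere and, by the hypothesis $R\to0$, $|\nabla f|\to1$ at infinity. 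Since $R>0$ somewhere and $R\to0$, $R$ has an interior maximum, and there $0=\nabla R=2\,\mathrm{Ric}(\nabla f)$ forces $\nabla f=0$ because $\mathrm{Ric}>0$ is invertible; call this point $p_0$. As $f$ is strictly concave ($\nabla^2 f=-\mathrm{Ric}<0$), $p_0$ is its unique critical point and global maximum, so $b:=f(p_0)-f\ge0$ vanishes only at $p_0$, and $|\nabla b|=|\nabla f|\le1$ gives $b\le d(p_0,\cdot)$. Finally, since changing the base point from $x_0$ to $p_0$ alters $d(x)^\alpha$ by $O(d(x)^{\alpha-1})$ while $R(x)\to0$, it suffices to prove the estimate with $d(x)=d(p_0,x)$.

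Suppose, for contradiction, that $\liminf_{d(x)\to\infty}R(x)d(x)^\alpha>0$ for some $\alpha\in(0,1)$, so $R(x)\ge c\,d(p_0,x)^{-\alpha}$ for some $c>0$ and all $x$ outside $B_{r_0}(p_0)$. Fix a nonempty open $\Omega_0$ with compact closure avoiding $p_0$, and let $\psi_t$ be the flow of $\nabla f$, which is complete since $|\nabla f|\le1$. Put $V(u):=\mathrm{Vol}_g\big(\psi_{-u}(\Omega_0)\big)$. Since $\psi_{-u}$ is the flow of $-\nabla f$ and $\mathrm{div}(-\nabla f)=-\Delta f=R\ge0$, the first variation of volume gives
\[
 V'(u)=\int_{\psi_{-u}(\Omega_0)} R\,dV_g\ \ge\ 0 .
\]
Along an orbit $s\mapsto\psi_{-s}(y)$ one has $\tfrac{d}{ds}b(\psi_{-s}(y))=|\nabla f|^2=1-R$; as $R\to0$ at infinity and $R=1$ only at $p_0$, the function $R$ is bounded away from $1$ on $\{b\ge b_1\}$, where $b_1:=\min_{\overline{\Omega_0}}b>0$, so this rate is $\ge\delta>0$ along orbits issuing from $\Omega_0$ (which stay in $\{b\ge b_1\}$). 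Hence $d(p_0,\psi_{-u}(y))\ge b(\psi_{-u}(y))\ge b_1+\delta u$, while $|\dot\psi_{-s}|=|\nabla f|\le1$ gives $d(p_0,\psi_{-u}(y))\le u+C$ with $C:=\max_{\overline{\Omega_0}}d(p_0,\cdot)$. So for $u$ large, $\psi_{-u}(\Omega_0)$ lies outside $B_{r_0}(p_0)$, where $R\ge c(u+C)^{-\alpha}\ge c'u^{-\alpha}$, and the displayed identity yields $V'(u)\ge c'u^{-\alpha}V(u)$.

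Integrating from some $u_1$ onward gives $V(u)\ge V(u_1)\exp\!\big(\tfrac{c'}{1-\alpha}(u^{1-\alpha}-u_1^{1-\alpha})\big)$, so $V(u)$ grows at least like $\exp(\text{const}\cdot u^{1-\alpha})$. On the other hand $\psi_{-u}(\Omega_0)\subseteq B_{u+C}(p_0)$, so Bishop--Gromov (valid since $\mathrm{Ric}\ge0$) forces $V(u)\le\omega_n(u+C)^n$. Since $1-\alpha>0$, the exponential lower bound eventually exceeds this polynomial ceiling --- a contradiction. Therefore $\liminf_{d(x)\to\infty}R(x)d(x)^\alpha\le0$, and since $R>0$ it equals $0$, for every $\alpha\in(0,1)$.

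The main obstacle is the geometric input that the gradient orbits $\psi_{-u}(\Omega_0)$ are comparable to metric spheres of radius $\sim u$ about $p_0$. The outer containment $\psi_{-u}(\Omega_0)\subseteq B_{u+C}(p_0)$ is immediate from $|\nabla f|\le1$, but the matching lower bound $d(p_0,\psi_{-u}(y))\gtrsim u$ relies on $|\nabla f|=\sqrt{1-R}$ staying bounded below along the orbits, which in turn uses that $f$ has a single critical point $p_0$ and that $R\to0$ uniformly. Given this comparison, the competition between the differential inequality $V'\ge c'u^{-\alpha}V$ and the polynomial Bishop--Gromov bound is routine, and the threshold $\alpha<1$ is precisely the borderline $\int^\infty u^{-\alpha}\,du=\infty$; this matches the Bryant soliton, for which $R\sim1/d$ and the conclusion just fails at $\alpha=1$.
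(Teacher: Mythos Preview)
Your argument is correct and takes a genuinely different route from the paper's. Both proofs start from the contradiction hypothesis $R\gtrsim d^{-\alpha}$ and exploit the identity $\Delta f=\pm R$ together with $|\nabla f|\le 1$, but they package this differently. The paper works with the sublevel sets $\Sigma_{\le r}=\{f\le r\}$: using the divergence theorem it gets $\mathrm{Vol}(\Sigma_r)\ge\int_{\Sigma_{\le r}}R$, feeds in the lower bound on $R$, integrates via coarea, and \emph{iterates} this loop finitely many times to push the volume growth exponent past $n$; the contradiction is then with the fact that the asymptotic volume ratio of a $\kappa$-noncollapsed steady soliton vanishes. You instead transport a fixed compact domain by the gradient flow and obtain the differential inequality $V'(u)\ge c'u^{-\alpha}V(u)$ directly; Gronwall gives stretched-exponential growth $V(u)\gtrsim\exp(cu^{1-\alpha})$, which you contrast with the elementary Bishop--Gromov upper bound. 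Your route is effectively the continuous limit of the paper's discrete bootstrap, and it buys two things: you never invoke the nontrivial $\mathrm{AVR}=0$ result (so the $\kappa$-noncollapsing hypothesis is not used), and you only need $\mathrm{Ric}\ge 0$ rather than nonnegative sectional curvature. The paper's level-set framework, on the other hand, dovetails with the area estimates of $\Sigma_r$ that are of independent interest elsewhere in the note.
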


%\begin{cor} \label{cor1}
%Under the same hypothesis as in Proposition \ref{main}, we have 
%\[
%\liminf_{x\rightarrow\infty} R(x)d(x)^\alpha=0
%\]
%for any fixed $\alpha\in (0,1)$. 
%\end{cor}
%\begin{proof}
%If not, then we have 
%\[
%R(x)\geq \frac{C}{d(x)^\alpha}
%\]
%for $C>0$ when $d(x)$ is sufficiently large and the same argument as in Proposition \ref{main} leads to contradiction. 
%\end{proof}

%By Proposition \ref{main} and apply the maximum principle to %$f$-Laplacian, we can prove the following theorem. 

Under the additional assumption 
\begin{equation}\label{cond0}
   \ric(\nabla f, \nabla f)\geq \frac{C}{d(x)^2} \text{ as } d(x)\rightarrow \infty
\end{equation}
for some constant $C>0$
and
\begin{equation}\label{cond1}
\lim_{d(x)\rightarrow \infty} R(x)d(x)^\alpha \text{ exists for some }\alpha\in [4/5,1),
\end{equation}
we can prove the following result. 
\begin{thm}\label{main2}
Let $(M,g,f)$ be a complete $n$-dimensional $\kappa$-noncollapsed steady soliton with nonnegative sectional curvature and positive Ricci curvature. Assuming scalar curvature $R(x)\rightarrow 0$ as $d(x)\rightarrow \infty$ uniformly with respect to distance, (\ref{cond0}) and (\ref{cond1})
%\begin{equation}\label{cond1}
%\lim_{d(x)\rightarrow \infty} R(x)d(x)^\alpha \text{ exists %for some }\alpha\in [1/2,1),
%\end{equation}
then $R(x)$ must decay at least linearly with respect to $d(x)$, namely 
\begin{equation}
    R(x)\leq \frac{C}{d(x)}, 
\end{equation}
where $d(x)$ is the distance from $x\in M$ to a fixed point $x_0\in M$ and $C$ is a constant independent of $x$. 
\end{thm}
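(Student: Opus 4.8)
The plan is to extract from Theorem~\ref{main} and hypotheses (\ref{cond0})--(\ref{cond1}) two matching a priori bounds for $R$, and then to run an iteration, driven by the soliton identity $\Delta_{f}R=-2|\ric|^{2}$, that improves the upper decay exponent until it reaches $1$. Throughout, normalize the soliton so that $R+|\nabla f|^{2}=1$; this is legitimate because $\ric>0$ rules out the trivial case $|\nabla f|\equiv 0$. Recall the standard steady identities $\Delta f=-R$, $\nabla R=2\ric(\nabla f,\cdot)$, and $\Delta_{f}R:=\Delta R-\langle\nabla R,\nabla f\rangle=-2|\ric|^{2}$, together with $|\ric|\le R$ (a consequence of nonnegative sectional, hence nonnegative Ricci, curvature) and the gradient estimate $|\nabla R|\le 2R$. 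Also, since $f$ is proper with a unique maximum (standard for steady solitons under these hypotheses) and $R\to 0$ forces $|\nabla f|\to 1$, one has $-f(x)\asymp d(x)$ as $d(x)\to\infty$: $f$ is $1$-Lipschitz, while the integral curves of $\nabla f$ are asymptotically unit speed and length-efficient. I will write $u:=-f$ and use it as a radial function; its sublevel sets $\Omega_{r}=\{u<r\}$ are bounded for large $r$.

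I would first record the two a priori bounds. From Theorem~\ref{main}, $\liminf_{d(x)\to\infty}R(x)d(x)^{\alpha}=0$ for all $\alpha\in(0,1)$; since by (\ref{cond1}) the limit $\lim_{d(x)\to\infty}R(x)d(x)^{\alpha_{0}}$ exists for some $\alpha_{0}\in[4/5,1)$, that limit is $0$, so $R(x)=o\bigl(d(x)^{-\alpha_{0}}\bigr)$ uniformly. For the lower bound, integrate the first-order identity along an outward, unit-speed integral curve $\gamma$ of $-\nabla f/|\nabla f|$: with arc length $s$,
\[
R(\gamma(s))=\int_{s}^{\infty}\frac{2\ric(\nabla f,\nabla f)}{\sqrt{1-R}}\,ds'\;\ge\;\int_{s}^{\infty}2\,\ric(\nabla f,\nabla f)\,ds',
\]
and since $d(\gamma(s'))\asymp s'$ along $\gamma$ and $s\le C\,d(x)$ when $\gamma(s)=x$, hypothesis (\ref{cond0}) gives $\ric(\nabla f,\nabla f)\gtrsim d^{-2}\gtrsim s'^{-2}$, whence $R(x)\gtrsim s^{-1}\gtrsim 1/d(x)$. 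Thus $R$ is pinched between $c/d$ and $o(d^{-\alpha_{0}})$; it remains only to raise the upper exponent from $\alpha_{0}$ to $1$.

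For the improvement I would use an energy estimate on the sublevel sets. Integrating $-\Delta_{f}R=2|\ric|^{2}\ge\tfrac{2}{n}R^{2}$ against $e^{-f}\,dV$ and using the $f$-self-adjointness of $\Delta_{f}$ (so the only boundary term is the weighted flux of $R$ through $\{u=r\}$, which has the favourable sign) gives
\[
\frac{2}{n}\int_{\Omega_{r}}R^{2}e^{-f}\,dV\;\le\;\int_{\{u=r\}}\frac{2\,\ric(\nabla f,\nabla f)}{\sqrt{1-R}}\,e^{-f}\,dA\;\le\;2\,e^{r}\Bigl(\sup_{\{u=r\}}R\Bigr)\operatorname{Area}(\{u=r\}),
\]
where $e^{-f}=e^{r}$ on $\{u=r\}$ and $\ric(\nabla f,\nabla f)\le R(1-R)$. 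On the right I would insert $\sup_{\{u=r\}}R=o(r^{-\alpha_{0}})$; on the left I would restrict to the annulus $\{r/2<u<r\}$, where $R\ge c/d\gtrsim 1/r$ keeps the bulk from degenerating, and then use the coarea formula together with the monotonicity $\tfrac{d}{dt}\operatorname{Area}(\{u=t\})\ge 0$ (valid because the mean curvature of $\{u=t\}$ with respect to $\nabla u/|\nabla u|$ equals $\bigl(R(1-R)-\ric(\nabla f,\nabla f)\bigr)(1-R)^{-3/2}\ge 0$) and the slow variation $\operatorname{Area}(\{u=t+1\})\le(1+o(1))\operatorname{Area}(\{u=t\})$ (from the same mean-curvature bound and $R\to 0$). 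The exponential weights $e^{r}\operatorname{Area}(\{u=r\})$ then cancel, leaving a scalar inequality whose careful form — sharpened by not discarding the full strength of $\ric(\nabla f,\nabla f)$ and by feeding in the $\kappa$-noncollapsed lower volume bound for $\{r/2<u<r\}$ — converts the rate $o(d^{-\alpha_{0}})$ into $o(d^{-\beta_{1}})$ with $\beta_{1}>\alpha_{0}$. Iterating raises the exponent, and $\alpha_{0}\ge 4/5$ is exactly the condition for it to reach $1$. A final application of $|\nabla R|\le 2R$ on unit balls upgrades the limiting statement to the pointwise bound $R(x)\le C/d(x)$.

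The hard part is this improvement step. The crude inequalities above do not close by themselves (they recover only $R\gtrsim d^{-2}$, weaker than what (\ref{cond0}) already gives), so the argument hinges on a sharp treatment of the weighted flux through $\{u=r\}$ versus the weighted volume of the annulus — that is, on understanding the level-set geometry (areas, their slow variation, the $O(R)$ mean curvature) precisely enough to track the exponent, and on combining it with the Bochner-type control $\Delta_{f}R=-2|\ric|^{2}$ and the pinching $c/d\le R\le o(d^{-\alpha_{0}})$. The exact number $4/5$ should emerge as the break-even between the quadratic gain from $|\ric|^{2}\gtrsim R^{2}$ and the loss in the error terms, which are controlled only to the order permitted by the current exponent and by (\ref{cond0}). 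A secondary, genuinely steady-specific nuisance is that the only gradient estimate available, $|\nabla R|\le 2R$, is exponential rather than polynomial, so curvature can be localized on large balls — as needed both for the noncollapsing input and for the final upgrade — only by invoking the assumed uniform decay of $R$; this must be threaded through the argument with care.
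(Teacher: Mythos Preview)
Your proposal has a genuine gap at the heart of the argument, and it also takes a very different route from the paper.

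\textbf{The gap.} The entire burden of your proof is the ``improvement step'' that is supposed to push the upper exponent from $\alpha_{0}$ up to $1$, and you explicitly acknowledge that the displayed integral inequality does not close: after cancelling the weights you recover at best $R\gtrsim d^{-2}$, which is weaker than the hypothesis you started from. Everything after that is a list of ingredients (slow variation of level-set areas, $\kappa$-noncollapsing, Bochner control, the pinching $c/d\le R\le o(d^{-\alpha_{0}})$) without an actual inequality that produces a strictly larger exponent $\beta_{1}>\alpha_{0}$. In particular you give no mechanism explaining why $\alpha_{0}\ge 4/5$ is the threshold at which the iteration would reach $1$; that number is asserted, not derived. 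As written, the scheme is a hope rather than a proof. (There is also a sign convention issue: with the paper's normalization $\ric=\nabla^{2}f$, the function $f$ is convex with a unique \emph{minimum} and $\Delta f=R$, so $f$ itself---not $-f$---is the exhaustion; this is cosmetic but should be fixed.)

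\textbf{What the paper does instead.} The paper avoids any iteration and uses a barrier/maximum-principle argument in the spirit of Chan. One checks by direct computation that $e^{-1/R^{2}}$ is a \emph{sub}solution of $\Delta_{f}$ far out, and that $e^{-f^{2}}$ is a \emph{super}solution. The subsolution computation is where everything happens: using $\Delta_{f}R=-2|\ric|^{2}\ge -CR^{2}$ and $|\nabla R|^{2}=4|\ric(\nabla f)|^{2}\ge 4\,\ric(\nabla f,\nabla f)^{2}$, one finds
\[
\Delta_{f}\bigl(e^{-1/R^{2}}\bigr)\;\ge\;e^{-1/R^{2}}R^{-6}\Bigl[\,c\,\ric(\nabla f,\nabla f)^{2}-C\,R^{5}\,\Bigr]\;\ge\;e^{-1/R^{2}}R^{-6}\Bigl[\,c\,f^{-4}-C\,f^{-5\alpha}\,\Bigr],
\]
where the last step uses (\ref{cond0}) and the decay $R=o(f^{-\alpha})$ coming from Theorem~\ref{main} together with (\ref{cond1}). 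This is nonnegative for large $f$ precisely when $5\alpha\ge 4$, i.e.\ $\alpha\ge 4/5$---that is the origin of the threshold. A suitable multiple of $e^{-f^{2}}$ then dominates $e^{-1/R^{2}}$ on a fixed large sphere, both tend to $0$ at infinity, and the maximum principle gives $e^{-1/R^{2}}\le b\,e^{-f^{2}}$ outside a compact set, which unwinds to $R\le C/f\le C'/d$. No iteration, no level-set area analysis, no volume input beyond what is already baked into the hypotheses.
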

%\begin{rmk}
%By Proposition \ref{main}, the condition (\ref{cond1}) implies 
%\[
%\lim_{d(x)\rightarrow \infty} R(x)d(x)^\alpha=0. 
%\]
%The condition (\ref{cond1}) serves as an asymptotic control of the curvature. 
%\end{rmk}
\begin{rmk}
By Proposition \ref{main}, the condition (\ref{cond1}) implies 
\[
\lim_{d(x)\rightarrow \infty} R(x)d(x)^\alpha=0 \text{ exists for some }\alpha\in [4/5,1). 
\]
The condition (\ref{cond1}) can be viewed as an asymptotic control of the curvature. 
\end{rmk}
\begin{rmk}
Note that by the result in \cite{jiaping}, under the non-collapsing condition, the scalar curvature can't decay faster than linear, by the uniform decaying condition, we can write 
\begin{equation}\label{2}
   R(x)\geq \frac{C'}{d(x)}
\end{equation}
for some constant $C'>0$, independent of $x$.  
\end{rmk}

%\begin{rmk}
%Note that by results in \cite{caochen}, $d(x)$ and $f(x)$ are bounded by each other with uniform constant. So we can replace $d(x)$ in  Theorem \ref{main} with $f(x)$ %whenever needed. 
%\end{rmk}
%\begin{rmk}
%Note that by results in \cite{guo2}, if further assuming $M$ has nonnegative curvature operator, then the linear decay of scalar curvature implies dimension reduction at %spatial infinity. 
%\end{rmk}

%In 3-dimension, the above statement was proved in \cite{guo}. Note that when combined results in \cite{guo} with \cite{dengzhu2}, we have another proof of the uniqueness %of 3-dimensional nonflat, $\kappa$-noncollapsed steady Ricci solitons. It's worth to review the main estimate in \cite{guo} for the convenience of readers. 

%\begin{lem}[\cite{guo}]
%Let $(M^3, g, f)$ be a steady gradient Ricci soliton, with strictly positive sectional curvature and $\kappa$-noncollapsed on all scales. Then for any $\delta>0$, there is %a compact set outside of which 
%\begin{equation}\label{1}
%\Delta R+\delta |\ric|^2\geq 0
%\end{equation}
%\end{lem}
%\begin{rmk}
%This estimate relies on the splitting theorem of 3-dimensional ancient $\kappa$-solutions at infinity. 
%\end{rmk}

%\begin{rmk}
%Note that the examples in \cite{dancerwang} and \cite{ivey} satisfy the above estimate (\ref{1}) because the examples are asymptotically product $\mathbb{R}\times %M_1\times \cdots \times M_n$ where $M_i$'s are Einstein manifolds.
%\end{rmk}

\section{Preliminary}
$(M,g,f)$ is called a gradient Ricci soliton if $\ric$ of $g$ satisfies 
\begin{equation}
    \ric=\nabla^2 f.
\end{equation}
and $f$ is a smooth function on $M$. In this note, we always assume the steady Ricci soliton has positive Ricci curvature. We also assume that $R$ attains a maximum at some point $p$ on $M$, so by the identity (after normalization)
\begin{equation}\label{soleqn}
    |\nabla f|^2+R=1.
\end{equation}
we know $p$ is a critical point of $f$ and by the assumption that $\ric>0$, $p$ is the unique critical point of $f$. 

\section{Proof of Theorem \ref{main}}
First, we list a few well-known results for steady solitons valid in higher dimensions. 

%\begin{lem}\label{mean}
%If $(M,g)$ satisfies the condition in Theorem \ref{main} and additionally, $|\ric|$ decays like $r^{-\alpha}$ with $\alpha\in (0,1)$, namely
%\begin{equation}
%    C_1 r^{-\alpha}\leq |\ric|\leq C_2 r^{-\alpha}.
%Then the mean curvature $H$ of the level set $\Sigma_r=\{x\in M| f(x)=r\}$ also decays like $r^{-\alpha}$.
%\end{lem}
%\begin{proof}
%This lemma follows from the formula 
%\[
%H=\frac{R}{|\nabla f|}-\frac{\ric(\nabla f, \nabla f)}{|\nabla f|^3}. 
%\]
%and the identity (\ref{soleqn}) and the fact that $R(x)\rightarrow 0$ as $x\rightarrow \infty$.  
%\end{proof}

We also need the following result on the lower bound of volume growth of steady soliton in \cite{sesum}.
\begin{thm}\label{vollb}\cite{sesum}
If $(M,g)$ is a complete gradient steady Ricci soliton there exists uniform constant $C$ so that for any $r>r_0$ 
\begin{equation}
    Vol(B_p(r))\geq Cr
\end{equation}
\end{thm}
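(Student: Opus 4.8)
The plan is to derive a uniform positive lower bound on the area of the geodesic spheres $\partial B_p(r)$ for all large $r$, and then integrate. Two standing facts about steady solitons drive the argument. First, by the normalized soliton identity (\ref{soleqn}) together with the fact that every complete steady soliton has $R\ge 0$ (and, in the present paper's setting, $\ric>0$ forces $R>0$ directly), we have $|\nabla f|^2 = 1 - R \le 1$, so $\nabla f$ is a vector field of norm at most one. Second, tracing the soliton equation $\ric = \nabla^2 f$ gives $\Delta f = R$.

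The key step is a flux estimate. For almost every $r$ the distance sphere $\partial B_p(r)$ bounds the Lipschitz domain $B_p(r)$, so the divergence theorem applied to $\nabla f$ yields
\begin{equation}
\int_{B_p(r)} R \, dV = \int_{B_p(r)} \Delta f \, dV = \int_{\partial B_p(r)} \langle \nabla f, \nu\rangle \, dA \le \int_{\partial B_p(r)} |\nabla f| \, dA \le \mathrm{Area}(\partial B_p(r)),
\end{equation}
where $\nu$ is the outward unit normal and I used $|\nabla f|\le 1$ in the last two inequalities. On the other hand, since $R\ge 0$ the map $r\mapsto \int_{B_p(r)} R\,dV$ is nondecreasing, and because the soliton is nontrivial (here $R(p)=1$ since $\nabla f(p)=0$ in (\ref{soleqn})) continuity gives $\int_{B_p(r_1)} R\, dV =: c_1 > 0$ for some fixed small $r_1$. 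Combining the two displays, $\mathrm{Area}(\partial B_p(r)) \ge c_1$ for all $r \ge r_1$.

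Finally I would integrate. Writing $V(r) = \vol(B_p(r))$, the co-area formula for the distance function $\rho=d(\cdot,p)$ (which has $|\nabla\rho|=1$ almost everywhere) gives $V'(r) = \mathrm{Area}(\partial B_p(r))$ for almost every $r$, so the area bound becomes $V'(r) \ge c_1$ for almost every $r \ge r_1$. Integrating from $r_1$ to $r$ yields $V(r) \ge c_1(r - r_1)$, which is $\ge C r$ with $C = c_1/2$ once $r \ge 2r_1$; shrinking $C$ absorbs the range $r\in(r_0,2r_1]$. This is precisely the asserted linear lower bound $\vol(B_p(r))\ge Cr$.

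The main obstacle is purely technical: justifying the divergence theorem and the identity $V'(r) = \mathrm{Area}(\partial B_p(r))$ across the cut locus of $p$. I would handle this in the standard way, using that the cut locus has measure zero, that $B_p(r)$ has locally finite perimeter (indeed is Lipschitz for almost every $r$), and that $f$ is smooth so the bulk integral $\int_{B_p(r)}\Delta f$ is unambiguous; alternatively one runs the same inequality distributionally via the barrier formulation of $\Delta\rho$. The geometric content---the uniform lower bound $\mathrm{Area}(\partial B_p(r)) \ge c_1$---comes entirely from the trace identity $\Delta f = R$, the gradient bound $|\nabla f|\le 1$, and the positivity of the total scalar curvature enclosed near $p$.
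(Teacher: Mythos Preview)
Your argument is correct and is precisely the Munteanu--Sesum proof from \cite{sesum}; note that the paper does not supply its own proof of this theorem but simply cites it. The one computation the paper does display, equation~(\ref{vollevel}), is exactly your divergence-theorem step $\int R = \int \Delta f \le \int |\nabla f|\,d\mu \le \mathrm{Vol}$ carried out on the potential level sets $\Sigma_r$ rather than on geodesic spheres, so your approach and the paper's use of the idea coincide.
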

By the equivalence of $f(x)$ and distance function in \cite{caochen}, we have, under the same conditions as in the Theorem \ref{vollb} above, the following volume lower bound on the sub-level set, $\Sigma_{\leq r}:=\{x\in M| f(x)\leq r\}$.

\begin{cor}
\begin{equation}\label{volumegrowth}
    Vol(\Sigma_{\leq r})\geq Cr
\end{equation}
\end{cor}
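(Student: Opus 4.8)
The plan is to read off \eqref{volumegrowth} from the geodesic ball lower bound of Theorem~\ref{vollb}, by comparing the sub-level sets of $f$ with metric balls centered at the critical point $p$. The only input needed is an upper bound for $f$ in terms of $d$, which is elementary: after the normalization \eqref{soleqn} we have $|\nabla f|^2 = 1 - R \le 1$, so integrating $\frac{d}{dt} f(\gamma(t)) = \langle \nabla f, \gamma'(t)\rangle \le |\nabla f| \le 1$ along a unit-speed minimizing geodesic $\gamma$ from $p$ to $x$ gives
\[
f(x) \le f(p) + d(x) =: d(x) + C_1 .
\]
(The sharper two-sided equivalence $c_1 d(x) - C_1 \le f(x) \le d(x) + C_1$ — together with the properness of $f$, so that $\Sigma_{\le r}$ is a bounded family exhausting $M$ as $r \to \infty$ — is the statement we quote from \cite{caochen}; note the sign convention $\ric = \nabla^2 f > 0$ makes $p$ the global minimum of $f$, consistent with exhausting $M$ through the sub-level sets $\{f \le r\}$. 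Only the upper bound is needed below.)

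From the upper bound we obtain the inclusion $B_p(r - C_1) \subseteq \Sigma_{\le r}$ for every $r$: indeed $d(x) \le r - C_1$ implies $f(x) \le (r - C_1) + C_1 = r$. (If the cited estimate has the form $f(x) \le a\, d(x) + b$ with $a \ne 1$, replace $r - C_1$ by $(r - b)/a$ throughout; nothing else changes.) Hence, applying Theorem~\ref{vollb}, for all $r$ with $r - C_1 > r_0$ we have
\[
\vl(\Sigma_{\le r}) \ge \vl\big(B_p(r - C_1)\big) \ge C\,(r - C_1) \ge \tfrac{C}{2}\,r
\]
as soon as $r \ge 2 C_1$. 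Renaming the constant and putting $r_0' := \max\{\,r_0 + C_1,\ 2 C_1\,\}$ yields $\vl(\Sigma_{\le r}) \ge C\, r$ for all $r > r_0'$, which is \eqref{volumegrowth} (with the same implicit ``$r$ large'' range as in Theorem~\ref{vollb}).

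I do not expect a genuine obstacle here: the argument is essentially bookkeeping around the comparison of \cite{caochen}. The one point that must be gotten right is the direction of the comparison — we use the bound on $f$ \emph{from above} by $d$, so that sub-level sets of $f$ \emph{contain} (rather than are contained in) the metric balls to which Theorem~\ref{vollb} applies. A secondary point is the normalization of $f$: since $\ric = \nabla^2 f > 0$ forces the unique critical point $p$ to be the global minimum, $\{f \le r\}$ is automatically the family filling up $M$ from the bottom as $r$ increases, which is the convention under which both the equivalence of \cite{caochen} and the exhaustion property are stated.
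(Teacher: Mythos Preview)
Your proposal is correct and follows essentially the same approach as the paper: use an upper bound $f(x)\le C_2\,d(x)$ (or your additive variant $f(x)\le d(x)+C_1$, derived directly from $|\nabla f|\le 1$) to obtain an inclusion of a metric ball into the sub-level set, then invoke Theorem~\ref{vollb}. The only cosmetic difference is that the paper cites the two-sided multiplicative comparison from \cite{caochen} and writes $B_p(r)\subset\Sigma_{\le C_2 r}$, whereas you derive the one-sided additive bound yourself and write $B_p(r-C_1)\subset\Sigma_{\le r}$; both yield the same linear lower bound after relabeling constants.
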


\begin{proof}
We use the equivalence of potential function and distance function from \cite{caochen}. Let $C_1, C_2$ be constants s.t. 
\[
C_1 d(x)\leq f(x)\leq C_2 d(x)
\]
and then we have the inclusion $B_p(r)\subset \Sigma_{\leq C_2 r}$, which implies that 
\[
Vol(\Sigma_{\leq C_2 r})\geq Vol(B_p(r))\geq Cr
\]
\end{proof}

\begin{prop}\label{curvint}
Let $(M,g,f)$ be a complete $n$-dimensional $\kappa$-noncollapsed on all scales steady soliton with  $\ric> 0$, and with scalar curvature $R$ decaying like $r^{-\alpha}$, $\alpha\in (0,1)$ when $r$ is sufficiently large, namely
\begin{equation}\label{scalardecay}
    C_1r^{-\alpha}\leq R\leq C_2r^{-\alpha},
\end{equation}
then we have 
\begin{equation}\label{curvint2}
\int_{\Sigma_{\leq r}} R\,d\mu\geq C r^{1-\alpha} \text{ for } r \text{ sufficiently large},
\end{equation}
where $C$ denotes a positive constant depending on $\alpha$. 
\end{prop}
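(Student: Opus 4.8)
The plan is to throw away a fixed compact core of $\Sigma_{\le r}$ and estimate the integral over the remaining annular shell, on which the hypothesis (\ref{scalardecay}) forces $R$ to be of size at least a constant multiple of $r^{-\alpha}$; one then closes the argument with the linear volume lower bound (\ref{volumegrowth}).

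In detail, I would first fix a radius $r_0$ large enough that the lower half of (\ref{scalardecay}) applies at every point with $f(x)\ge r_0$. Using the equivalence of the potential and the distance function from \cite{caochen} to convert between sublevel sets of $f$ and distance balls, this yields a constant $c_0>0$ (depending on $\alpha$) such that
\[
R(x)\ \ge\ c_0\, f(x)^{-\alpha}\qquad\text{whenever } f(x)\ge r_0 .
\]
Hence, on the shell $\Sigma_{\le r}\setminus\Sigma_{\le r_0}$, where $r_0\le f(x)\le r$, we have $R(x)\ge c_0 f(x)^{-\alpha}\ge c_0 r^{-\alpha}$, so that
\[
\int_{\Sigma_{\le r}} R\,d\mu\ \ge\ \int_{\Sigma_{\le r}\setminus\Sigma_{\le r_0}} R\,d\mu\ \ge\ c_0\, r^{-\alpha}\Big(\mathrm{Vol}(\Sigma_{\le r})-\mathrm{Vol}(\Sigma_{\le r_0})\Big).
\]
Since $f$ is proper (being comparable to $d$), the sublevel set $\Sigma_{\le r_0}$ is compact and $\mathrm{Vol}(\Sigma_{\le r_0})$ is a finite constant independent of $r$; combining this with $\mathrm{Vol}(\Sigma_{\le r})\ge Cr$ from (\ref{volumegrowth}), for $r$ sufficiently large we obtain $\mathrm{Vol}(\Sigma_{\le r})-\mathrm{Vol}(\Sigma_{\le r_0})\ge \tfrac12 Cr$, and therefore $\int_{\Sigma_{\le r}} R\,d\mu\ge \tfrac12 c_0 C\, r^{1-\alpha}$, which is the assertion.

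The argument is short, and the one point that needs care is the matching of ranges: (\ref{scalardecay}) is only assumed for large arguments, so one must make sure the bound $R\ge c_0 r^{-\alpha}$ truly holds on the \emph{whole} shell $\Sigma_{\le r}\setminus\Sigma_{\le r_0}$ and with a constant $c_0$ that does not degrade as $r\to\infty$; removing the \emph{fixed} core $\Sigma_{\le r_0}$ (rather than, say, $\Sigma_{\le r/2}$) is exactly what makes both of these automatic, and it also means no upper volume bound is needed. Note that the upper half of (\ref{scalardecay}) plays no role in this Proposition. One could instead begin from the identity $\int_{\Sigma_{\le r}} R\,d\mu=\int_{\{f=r\}}|\nabla f|\,d\sigma$, which follows from the divergence theorem and $\Delta f=R$ (the trace of the soliton equation), but that route would force one to bound the area of the level set $\{f=r\}$ from below, which is less transparent than the volume estimate used above.
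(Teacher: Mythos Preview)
Your argument is correct and is essentially the paper's approach: both isolate a fixed compact core $\Sigma_{\le r_1}$, use the lower half of (\ref{scalardecay}) to bound $R\ge c\,r^{-\alpha}$ on the remaining shell, and then invoke the linear volume lower bound (\ref{volumegrowth}) to conclude. The paper inserts a coarea-formula step $\int_{\{r_1\le f\le r\}}R\,dV\ge\int_{r_1}^r\int_{\Sigma_s}R\,d\mu\,ds$ (using $|\nabla f|\le 1$) before reaching $Cr^{1-\alpha}$, whereas you go directly from the pointwise bound to the volume estimate; your route is cleaner and, as you note, needs neither the upper half of (\ref{scalardecay}) nor any level-set area bound.
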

\begin{proof}
%First we can compute the derivative of the quantity
%\begin{equation}\label{gradient}
%    \frac{d}{dr}\int_{\Sigma_{\leq r}} R|\nabla f|\,dV=\frac{d}{dr}\int_{r_0}^r \int_{\Sigma_s} R\,d\mu ds=\int_{\Sigma_r} R\,d\mu
%\end{equation}
%Since $R=O(r^{-\alpha})$ by assumption, and from \cite{sesum} we know $Vol(\Sigma_r)\geq C$ for $r$ sufficiently large, then 
%\[
%\int_{\Sigma_{\leq r}} R|\nabla f|\,dV\geq Cr^{1-\alpha}.
%\]
%Using the fact that $|\nabla f|\leq 1$, we have 
%\begin{equation}
%    \int_{\Sigma_{\leq r}} R\,dV\geq Cr^{1-\alpha}.
%\end{equation}
Using the result in \cite{sesum} on the lower bound of volume growth and Corollary \ref{volumegrowth}, let $r_1$ be such that Corollary \ref{volumegrowth} and (\ref{scalardecay}) hold,  then we know, by coarea formula,  
\[
\int_{\Sigma_{\leq r}} R\,dV-\int_{\Sigma_{\leq r_1}} R\,dV= \int_{\{x|r_1 \leq f(x)\leq r\}}R\,dV\geq \int_{r_1}^r \int_{\Sigma_s}R\,d\mu ds\geq C r^{1-\alpha},
\]
for $r$ sufficiently large. The claim follows from the above. 
\end{proof}
The left hand side of the above is by the computation in \cite{sesum}
\begin{equation}\label{vollevel}
    \int_{\Sigma_{\leq r}} R\,dV= \int_{\Sigma_r} \Delta f \,dV\leq \int_{\Sigma_r} |\nabla f|\,d\mu\leq Vol(\Sigma_r)
\end{equation}
From the above estimate, we have the following estimate. 
\begin{cor}\label{sublev}
Under the same conditions as in Proposition \ref{curvint}, we have 
\begin{equation}
    Vol(\Sigma_r)\geq C r^{1-\alpha}, \text{ for }r \text{ sufficiently large}.
\end{equation}
where $C>0$ is a constant depending on $\alpha$.  
\end{cor}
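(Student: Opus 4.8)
The plan is to read the estimate off directly from the two facts that immediately precede it, so the argument should be essentially one line: the corollary is a formal consequence of Proposition~\ref{curvint} combined with the chain of inequalities~(\ref{vollevel}), and no new geometric input is required.

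Concretely, I would first note that the hypotheses imposed here are verbatim those of Proposition~\ref{curvint} --- $\kappa$-noncollapsing on all scales, $\ric>0$, and the two-sided bound $C_1 r^{-\alpha}\le R\le C_2 r^{-\alpha}$ for $r$ large with $\alpha\in(0,1)$ --- so (\ref{curvint2}) applies and gives $\int_{\Sigma_{\le r}} R\,d\mu\ge C r^{1-\alpha}$ for all sufficiently large $r$, with $C=C(\alpha)>0$. On the other hand, the computation recorded in~(\ref{vollevel}) --- the steady trace identity $\Delta f=R$, the divergence theorem on the compact domain $\Sigma_{\le r}$, and the bound $|\nabla f|\le 1$ coming from the normalization~(\ref{soleqn}) --- yields $\int_{\Sigma_{\le r}} R\,dV=\int_{\Sigma_r}|\nabla f|\,d\mu\le \mathrm{Vol}(\Sigma_r)$. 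Chaining these two displays gives $\mathrm{Vol}(\Sigma_r)\ge\int_{\Sigma_{\le r}}R\,dV\ge C r^{1-\alpha}$ for $r$ large, which is precisely the assertion.

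I do not expect a genuine obstacle here: all of the real work sits in Proposition~\ref{curvint} (which itself rests on the volume lower bound of Theorem~\ref{vollb} and the sub-level estimate~(\ref{volumegrowth})) and in the identity~(\ref{vollevel}). The only points worth a word of care are bookkeeping ones: first, that the constant produced is the $\alpha$-dependent constant inherited from Proposition~\ref{curvint}, not a universal one; and second, that $\mathrm{Vol}(\Sigma_r)$ is well defined for \emph{every} large $r$, which holds because $\ric>0$ forces $\nabla^2 f>0$, so $f$ has a single critical point $p$ and $\Sigma_r$ is a smooth compact hypersurface for all $r>f(p)$, with no appeal to Sard's theorem needed.
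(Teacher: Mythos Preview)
Your argument is correct and is exactly the paper's approach: the corollary is obtained by chaining the integral lower bound~(\ref{curvint2}) from Proposition~\ref{curvint} with the inequality~(\ref{vollevel}), and the paper itself merely says ``from the above estimate'' without spelling out anything further. Your additional remarks on the $\alpha$-dependence of the constant and the regularity of $\Sigma_r$ are accurate and more detailed than what the paper provides.
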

%\begin{proof}
%Pick an $r_0$ such that (\ref{curvint}) holds, then by the coarea formula, we have 
%\[
%\int_{\Sigma_{\leq r}}R\,dV-\int_{\Sigma_{\leq r_0}}R\,dV\geq \int_{r_0}^r \int_{\Sigma_s} R\,d\mu\,ds\geq \int_{r_0}^r C(\alpha)s^{1-\alpha}\,ds\geq C(\alpha) %r^{2-\alpha},
%\]
%for $r$ sufficiently large. 
%\end{proof}
By the above corollary, we have the following lower bound of $Vol(\Sigma_{\leq r})$,
\begin{prop}\label{level}
Under the same conditions as in Proposition \ref{curvint}, we have 
\begin{equation}\label{vol}
    Vol(\Sigma_{\leq r})\geq C r^{2-\alpha}, \text{ for }r \text{ sufficiently large}.
\end{equation}
\end{prop}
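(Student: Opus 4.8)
The plan is to derive (\ref{vol}) from the level-set estimate of Corollary \ref{sublev} by one integration via the coarea formula, exploiting that $|\nabla f|$ is uniformly bounded above by $1$. Since $\nabla^2 f=\ric>0$ the potential $f$ is strictly convex, hence attains its global minimum at its unique critical point $p$ (already noted above), and for every $s>f(p)$ the sublevel set $\Sigma_{\le s}$ is compact with smooth regular boundary $\Sigma_s$ --- recall $f$ is proper by the equivalence with the distance function in \cite{caochen}. Fix $r_1>f(p)$ large enough that Corollary \ref{sublev} applies on $\{f\ge r_1\}$. The coarea formula then gives, for $r>r_1$,
\begin{equation}
\mathrm{Vol}(\Sigma_{\le r})=\mathrm{Vol}(\Sigma_{\le r_1})+\int_{r_1}^{r}\Big(\int_{\Sigma_s}\frac{1}{|\nabla f|}\,d\mu\Big)\,ds.
\end{equation}

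Next I would use the soliton identity (\ref{soleqn}): since $R\ge 0$ we have $|\nabla f|^2=1-R\le 1$, so $1/|\nabla f|\ge 1$ pointwise on $\{f>f(p)\}$. Dropping the (nonnegative) term $\mathrm{Vol}(\Sigma_{\le r_1})$ and inserting Corollary \ref{sublev},
\begin{equation}
\mathrm{Vol}(\Sigma_{\le r})\ \ge\ \int_{r_1}^{r}\mathrm{Vol}(\Sigma_s)\,ds\ \ge\ C\int_{r_1}^{r}s^{1-\alpha}\,ds\ =\ \frac{C}{2-\alpha}\big(r^{2-\alpha}-r_1^{2-\alpha}\big),
\end{equation}
with $C=C(\alpha)>0$ the constant of Corollary \ref{sublev}. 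As $2-\alpha>1$, for $r$ large the right-hand side is at least $\tfrac{C}{2(2-\alpha)}\,r^{2-\alpha}$, which is (\ref{vol}) with a constant depending only on $\alpha$.

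The only points that require attention are the legitimacy of the coarea formula --- reducing to properness of $f$ and regularity of $\Sigma_s$ for $s>f(p)$, both already in hand --- and the uniformity over $s\in[r_1,r]$ of the constant in Corollary \ref{sublev}, which is part of that statement. I do not expect a real obstacle here: the proposition is precisely the ``extra integration'' upgrade of Corollary \ref{sublev}, and it is clean because $|\nabla f|\to 1$ at infinity, so nothing is lost in passing from $\int_{\Sigma_s}|\nabla f|^{-1}\,d\mu$ to $\mathrm{Vol}(\Sigma_s)$.
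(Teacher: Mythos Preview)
Your argument is correct and matches the paper's own proof: both apply the coarea formula to $\Sigma_{\le r}$, use $|\nabla f|\le 1$ to bound $\int_{\Sigma_s}|\nabla f|^{-1}\,d\mu\ge \mathrm{Vol}(\Sigma_s)$, and then integrate the bound of Corollary~\ref{sublev} in $s$. You are simply more explicit about the $|\nabla f|^{-1}$ weight and the absorption of the fixed term $r_1^{2-\alpha}$, but the route is the same.
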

\begin{proof}
%Since $R>0$, for $r$ sufficiently large we have 
%\begin{equation}\label{volpf}
 %   0<C(\alpha) r^{2-\alpha}\leq \int_{\Sigma_{\leq r}} R \,d\mu=\int_{\Sigma_{\leq r}} \Delta f\leq \int_{\Sigma_r} |\nabla f|\leq Vol(\Sigma_r),
%\end{equation}
%where $|\nabla f|\leq 1$ by (\ref{soleqn}). By integrating over $r$, we have the claim. 
By the coarea formula, we have 
\[
Vol(\Sigma_{\leq r})\geq \int_{0}^r\int_{\Sigma_s}\,d\mu=\int_{r_0}^r\int_{\Sigma_s}\,d\mu ds+\int_0^{r_0}\int_{\Sigma_s}\,d\mu ds,
\]
where $r_0$ is chosen so that Corollary \ref{sublev} holds. By choosing $r$ sufficiently large, we have 
\[
Vol(\Sigma_{\leq r})\geq Cr^{2-\alpha}
\]
where $C>0$ and depends on $\alpha, r_0$. 
\end{proof}
We will use the above results to prove Proposition \ref{main}. The proof is by contradiction and iteration of the above process. 
\begin{proof}
Suppose by contradiction that $(M,g)$ has the limit 
\[
\liminf_{d(x)\rightarrow \infty} R(x)d(x)^\alpha=C>0. 
\]
We apply the volume estimate (\ref{vol}) back to the formula (\ref{curvint}). Choose $r_1>0$ s.t. Proposition \ref{level} holds, we have 
\begin{equation}\label{1}
        \int_{\Sigma_{\leq r}} R\,d\mu \geq C r^{2-2\alpha},
\end{equation}
for $r\geq r_1$. 

%Integrating over $r$ as in Proposition \ref{curvint} and Corollary \ref{sublev}, we have 
%\begin{equation}\label{2}
%   \int_{\Sigma_r} R\,d\mu \geq C(\alpha) r^{3-2\alpha},
%\end{equation}
%\begin{equation}\label{3}
 %   \int_{\Sigma_{\leq r}} R\,d\mu \geq C(\alpha) r^{4-2\alpha},
%\end{equation}
By the same proof as in (\ref{vollevel}), we get
\begin{equation}\label{4}
     Vol(\Sigma_{r})\geq C r^{2-2\alpha}. 
\end{equation}
Now we iterate the process (\ref{1}-\ref{4}), then after $k$ iterations, we have 
\begin{equation}
    Vol(\Sigma_{r})\geq C r^{k(1-\alpha)}, \text{ for } r \text{ sufficiently large}.
\end{equation}
So when $k$ is sufficiently large, we get
\begin{equation}
Vol(\Sigma_{r})\geq C r^n, \text{ for } r \text{ sufficiently large}.
\end{equation}
Using the equivalence of potential function and distance function from \cite{caochen}. Let $C_1', C_2'$ be constants s.t. 
\[
C_1' f(x)\leq d(x)\leq C_2' f(x)
\]
and then we have the inclusion $\Sigma_{\leq C_1' r}\subset B_p(r)$, which implies that 
\[
 Vol(B_p(r))\geq Vol(\Sigma_{\leq C_1' r})\geq Cr^n,
\]
which contradicts the result that asymptotic volume ratio equals 0 for $\kappa$-noncollapsed steady solitons. 

\end{proof}

\section{Proof of Theorem \ref{main2}}
The proof follows similar ideas of choosing barrier functions as in the work of Chan \cite{chan}. We first show that the function $e^{-1/R^2}$ is subharmonic with respect to the weighted Laplacian associated to the function $f$. 

\begin{lem}\label{subharm}
Let $(M,g,f)$ be a complete gradient steady soliton satisfying the conditions in Theorem \ref{main2}. 
%Then the function $e^{-1/R(x)}$ satisfies 
Then the function $e^{-1/R^2}$ satisfies
\begin{equation}
    \Delta_{f}(e^{-1/R^2})\geq 0 \text{ for } d(x) \text{ sufficiently large}.
\end{equation}
\end{lem}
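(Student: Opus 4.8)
The plan is to compute $\Delta_f(e^{-1/R^2})$ directly using the chain rule and the standard soliton identities, and to show the leading-order term has a favorable sign once $R$ is small (equivalently $d(x)$ large). Writing $u = e^{-1/R^2}$ and $\phi(R) = e^{-1/R^2}$, we have $\phi'(R) = \tfrac{2}{R^3}e^{-1/R^2}$ and $\Delta_f u = \phi'(R)\,\Delta_f R + \phi''(R)\,|\nabla R|^2$. For a steady gradient soliton the key identities are $\Delta_f R = \Delta R - \langle \nabla f, \nabla R\rangle = -2|\ric|^2$ (from $\Delta R = \langle \nabla f,\nabla R\rangle - 2|\ric|^2$, which itself follows from the contracted second Bianchi identity $\tfrac12\nabla R = \ric(\nabla f)$ together with the soliton equation), so that in fact $\Delta_f R = -2|\ric|^2$ and also $|\nabla R|^2 = 4|\ric(\nabla f)|^2 = 4\,\ric(\nabla f,\nabla f)\cdot(\text{something})$—more precisely $|\nabla R|^2 = 4|\ric(\nabla f)|^2$. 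So
\[
\Delta_f u = e^{-1/R^2}\left( \tfrac{2}{R^3}\,\Delta_f R + \left(\tfrac{4}{R^6} - \tfrac{6}{R^4}\right)|\nabla R|^2 \right)
= e^{-1/R^2}\left( -\tfrac{4|\ric|^2}{R^3} + \left(\tfrac{4}{R^6} - \tfrac{6}{R^4}\right)\cdot 4|\ric(\nabla f)|^2 \right).
\]

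Now I would isolate the dominant positive term $\tfrac{16}{R^6}|\ric(\nabla f)|^2$ and show it beats the two negative terms $-\tfrac{4|\ric|^2}{R^3}$ and $-\tfrac{24}{R^4}|\ric(\nabla f)|^2$ for $d(x)$ large. Using nonnegative sectional curvature we have the crude bound $|\ric|^2 \le n\,R^2$ (since $0 \le \ric \le R\cdot g$ in the operator sense), so the first negative term is bounded below by $-4nR^2/R^3 = -4n/R$. For the comparison we use hypothesis (\ref{cond0}): $\ric(\nabla f,\nabla f) \ge C/d(x)^2$, and since $|\nabla f|^2 = 1 - R \to 1$, this gives $|\ric(\nabla f)|^2 \ge \ric(\nabla f,\nabla f)^2 \ge C^2/d(x)^4$ (using Cauchy–Schwarz $|\ric(\nabla f)|^2 \ge \ric(\nabla f,\nabla f)^2/|\nabla f|^2$). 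Meanwhile hypothesis (\ref{cond1}) together with the remark gives $R \sim c\, d(x)^{-\alpha}$ with $\alpha \in [4/5,1)$, and the lower bound $R \ge C'/d(x)$ from \cite{jiaping}. Then $\tfrac{16}{R^6}|\ric(\nabla f)|^2 \gtrsim d(x)^{6\alpha}\cdot d(x)^{-4} = d(x)^{6\alpha - 4}$, while $\tfrac{4|\ric|^2}{R^3} \lesssim 1/R \lesssim d(x)^{\alpha} \le d(x)$, and $\tfrac{24}{R^4}|\ric(\nabla f)|^2 \lesssim d(x)^{4\alpha - 4}$. The positive term dominates the second negative term trivially ($6\alpha - 4 > 4\alpha - 4$), and it dominates the first when $6\alpha - 4 \ge \alpha$, i.e. $\alpha \ge 4/5$—which is exactly the threshold appearing in (\ref{cond1}). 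Hence $\Delta_f u \ge 0$ for $d(x)$ sufficiently large.

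The main obstacle I anticipate is controlling the interplay between the two negative terms precisely enough: the term $-24|\ric(\nabla f)|^2/R^4$ is genuinely comparable in scale to the positive term $16|\ric(\nabla f)|^2/R^6$ divided by $R^{-2}$, so one cannot simply throw it away—one needs $R$ small (i.e. $1/R^6 \ge 6/R^4$, equivalently $R^2 \le 2/3$, which holds for $d(x)$ large since $R \to 0$) to guarantee the net coefficient $\tfrac{4}{R^6} - \tfrac{6}{R^4}$ of $|\nabla R|^2$ is positive. Once that sign is secured, the remaining inequality is the clean comparison $\tfrac{16}{R^6}|\ric(\nabla f)|^2 \ge \tfrac{4|\ric|^2}{R^3}$, i.e. $4|\ric(\nabla f)|^2 \ge R^3|\ric|^2$, which via $|\ric|^2 \le nR^2$ and (\ref{cond0}) reduces to $4C^2/d(x)^4 \ge nR^5$, and this is where (\ref{cond1}) with $\alpha \ge 4/5$ is used: $R^5 \lesssim d(x)^{-5\alpha} \le d(x)^{-4}$ precisely when $\alpha \ge 4/5$. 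I would organize the write-up to first establish the identity for $\Delta_f u$, then fix $r_0$ large so that $R^2 < 2/3$ and the asymptotic hypotheses (\ref{cond0}), (\ref{cond1}) are in force, and finally verify the term-by-term inequality on $\{d(x) > r_0\}$.
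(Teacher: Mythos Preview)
Your approach is essentially the same as the paper's: compute $\Delta_f(e^{-1/R^2})$ via the chain rule, use $\Delta_f R=-2|\ric|^2$ and $|\nabla R|^2=4|\ric(\nabla f)|^2$, secure the sign of the coefficient $4R^{-6}-6R^{-4}$ for small $R$, bound $|\ric|^2\le R^2$ and $|\ric(\nabla f)|^2\ge \ric(\nabla f,\nabla f)^2\ge C^2 d(x)^{-4}$ via (\ref{cond0}), and then compare $d(x)^{-4}$ against $R^5\lesssim d(x)^{-5\alpha}$ via (\ref{cond1}) to finish when $\alpha\ge 4/5$. Your second paragraph is the clean version that matches the paper's computation line for line (the paper writes the final comparison as $4(4-6R^2)f(x)^{-4}-c\,f(x)^{-5\alpha}\ge 0$); the rougher separate estimates in your first paragraph are not all sharp (e.g.\ $1/R\lesssim d^{\alpha}$ should read $\lesssim d$, and $R\sim c\,d^{-\alpha}$ should be $R=o(d^{-\alpha})$ by the remark), but they are superseded by the correct argument you give immediately after.
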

\begin{proof}
The proof follows from direct computation. Firstly, 
\begin{align}\label{est1}
%\Delta_f(e^{-1/R})&=e^{-1/R}\cdot R^{-4}\cdot |\nabla %R|^2-2e^{-1/R}\cdot R^{-3}\cdot |\nabla R|^2+e^{-1/R}\cdot %R^{-2}\cdot\Delta_f R\\
%&\geq e^{-1/R}\cdot R^{-4}\cdot |\nabla R|^2(1-2R)-e^{-1/R}\\
%&= 4e^{-1/R}\cdot R^{-4}\cdot |\ric(\nabla %f)|^2(1-2R)-e^{-1/R}, 
\begin{split}
\Delta_f(e^{-1/R^2})&=e^{-1/R^2}(4R^{-6}-6R^{-4})|\nabla R|^2+2e^{-1/R^2}\cdot R^{-3}\cdot \Delta_f R\\
&\geq e^{-1/R^2}(4R^{-6}-6R^{-4})|\nabla R|^2-2e^{-1/R^2}\cdot R^{-1}\\
&=e^{-1/R^2}R^{-6}\bigg[(4-6R^2)|\nabla R|^2-2R^5\bigg]\\
&= e^{-1/R^2}R^{-6}\bigg[4(4-6R^2)|\ric(\nabla f)|^2-2R^5\bigg]\\
&\geq e^{-1/R^2}R^{-6}\bigg[4(4-6R^2)|\ric(\nabla f, \nabla f)|^2-2R^5\bigg]\\
&\geq e^{-1/R^2}R^{-6}\bigg[4(4-6R^2) f(x)^{-4}-c f(x)^{-5\alpha}\bigg],
\end{split}
\end{align}
where in the last inequality we use Theorem  \ref{main}, the condition (\ref{cond0}) and the condition (\ref{cond1}). By choosing $\alpha\in [4/5,1)$, we know that 
\[
\Delta_f(e^{-1/R^2})\geq 0,  
\]
when $f(x)$ is sufficiently large. 

%where in the second line we use 
%\[
%\Delta_f R\geq -2|\ric|^2\geq -R^2
%\]
%under the nonnegative sectional curvature condition and 
%in the fourth line we use the fact that 
%for $n\geq 3$, nonnegative sectional curvature implies 
%\[
%2|\ric|^2\leq R^2\leq n|\ric|^2. 
%\]
%Since $|\ric|(x)\geq C/f(x)$ for some $C>0$ by the $\kappa$-noncollapsing %condition and Corollary 5.5 in \cite{jiaping}, we have the following, 
%$\nabla R=2\ric(\nabla f)$ and in the last line we use $2\ric(\nabla f, \nabla f)=\Delta R+2|\ric|^2$. 
%\begin{align*}
%    \Delta_f(e^{-1/R})&\geq 4e^{-1/R}\cdot R^{-4}\cdot C^2 f(x)^{-2}\cdot (1-2R)-2e^{-1/R}\\
%    &= 2e^{-1/R}\cdot R^{-4}\bigg[C^2 f(x)^{-2}\cdot(1-2R)-R^4\bigg]\\
%    &\geq 2e^{-1/R}\cdot R^{-4}\bigg[C^2 %f(x)^{-2}\cdot(1-2R)-c f(x)^{-4\alpha}\bigg],
%\end{align*}
%where in the last inequality we use Proposition  \ref{main} %and the condition (\ref{cond1}). By choosing $\alpha\in %[1/2,1)$, we know that 
%\[
%\Delta_f(e^{-1/R})\geq 0,  
%\]
%when $f(x)$ is sufficiently large. 
%Using the condition (\ref{cond0}), and the general fact that $n|\ric|^2\geq R^2$, we get 
%\begin{equation}\label{est2}
%\bigg|\Delta R+2|\ric|^2\bigg|\geq cR^2 \text{ for some constant } c>0. 
%\end{equation}
%Combining (\ref{est1}) and (\ref{est2}), we have 
%\begin{align}
%    \begin{split}
%        \Delta_f(e^{-1/R^2})&\geq e^{-1/R^2}R^{-6}\bigg[c(4-6R^2)R^4-2R^5\bigg]\\
%        &=e^{-1/R^2}R^{-2}\bigg[c(4-6R^2)-2R\bigg]\\
%        &\geq 0, 
%    \end{split}
%\end{align}
%for $d(x)$ sufficiently large, under the assumption that $R(x)\rightarrow 0$ as $d(x)\rightarrow 0$. 
\end{proof}

Next we show that $e^{-f^2(x)}$ is supersolution of $\Delta_f$. 
\begin{lem}\label{supharm}
Let $(M,g,f)$ be a complete gradient steady soliton satisfying the conditions in Theorem \ref{main2}. Then the function $e^{-f^2}$ satisfies 
\begin{equation}
    \Delta_{f}(e^{-f^2})\leq 0.
\end{equation}
\end{lem}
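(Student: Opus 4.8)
The plan is to establish the inequality by a direct computation in the spirit of Lemma \ref{subharm}, viewing $e^{-f^2}$ as a function of the potential $f$ alone and reducing everything to the soliton identities. The basic tool is the chain rule for the weighted Laplacian on radial functions of $f$: for any smooth $\phi$ one has $\Delta_f(\phi(f)) = \phi'(f)\,\Delta_f f + \phi''(f)\,|\nabla f|^2$. Taking $\phi(s)=e^{-s^2}$, so that $\phi'(s)=-2s\,e^{-s^2}$ and $\phi''(s)=(4s^2-2)\,e^{-s^2}$, this gives
\begin{equation}
\Delta_f(e^{-f^2}) = e^{-f^2}\Big[(4f^2-2)\,|\nabla f|^2 - 2f\,\Delta_f f\Big],
\end{equation}
so that the sign of $\Delta_f(e^{-f^2})$ is entirely controlled by the bracketed expression.

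The next step is to evaluate the bracket using the structure of the soliton. Tracing $\ric=\nabla^2 f$ gives $\Delta f = R$, and the normalization (\ref{soleqn}) gives $|\nabla f|^2 = 1-R$; hence $\Delta_f f$ is an affine expression in $R$ and the bracket becomes an explicit quadratic polynomial in $f$ whose coefficients depend only on $R$. Substituting the equivalence $f \asymp d(x)$ from \cite{caochen}, together with the decay of $R$ furnished by Theorem \ref{main} and the hypothesis (\ref{cond1}), then converts the claim into a pointwise inequality between this polynomial and a power of $f$.

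The hard part will be controlling the sign of the bracket. The contribution $(4f^2-2)\,|\nabla f|^2$, which comes from the convexity of $s\mapsto e^{-s^2}$, is positive and of quadratic order in $f$, whereas the drift term $-2f\,\Delta_f f$ is only linear in $f$; a crude estimate therefore leaves a positive leading term and cannot close. The decisive point is thus to show that, in the regime where the barrier is applied, the drift together with the precise decay rate of $R$ suffices to absorb the quadratic term and force the bracket to be nonpositive. I expect this balance---rather than the differential identity, which is routine---to be the genuine obstacle, precisely because it is the same balance that must hold for $e^{-f^2}$ to dominate the subsolution $e^{-1/R^2}$ of Lemma \ref{subharm} and thereby yield the linear decay of $R$.
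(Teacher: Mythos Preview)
Your chain--rule identity and the resulting bracket
\[
\Delta_f(e^{-f^2})=e^{-f^2}\Big[(4f^2-2)\,|\nabla f|^2 - 2f\,\Delta_f f\Big]
\]
coincide with the paper's computation; the paper then substitutes $\Delta_f f=1$ (from $\Delta f=R$ together with the normalization $|\nabla f|^2+R=1$, so that $\Delta_f=\Delta+\nabla_{\nabla f}$) and writes the expression as $2e^{-f^2}\big[(2f^2-1)|\nabla f|^2-f\big]$, which it simply declares to be $\le 0$ for $d(x)$ large.

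The genuine gap in your plan is the final ``balance'' step, and it cannot be filled in the way you suggest. You correctly observe that the convexity contribution $(4f^2-2)|\nabla f|^2$ is quadratic in $f$ while the drift contribution $-2f\,\Delta_f f=-2f$ is only linear, so a crude estimate leaves a positive leading term. You then propose that the decay of $R$ will let the linear term absorb the quadratic one. But the decay of $R$ pushes in exactly the wrong direction: since $|\nabla f|^2=1-R$, the bracket equals
\[
(2f^2-1)(1-R)-f \;=\; (2f^2-f-1)-(2f^2-1)R,
\]
and with $f\asymp d(x)\to\infty$ while $R\to 0$ (indeed $R\,d(x)^{\alpha}\to 0$ by Theorem~\ref{main} under (\ref{cond1})), the subtracted term $(2f^2-1)R=o(f^2)$ is negligible and the bracket tends to $+\infty$, not to something nonpositive. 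No refinement of the decay rate of $R$ can reverse this, because $R$ enters the bracket only through the factor $|\nabla f|^2=1-R$, and smaller $R$ makes that factor \emph{larger}. So the obstacle you flag as the hard part is real, but the mechanism you propose for overcoming it fails; the paper's own proof reaches the identical bracket and asserts the inequality without confronting this point, so the difficulty lies with the choice of barrier $e^{-f^2}$ itself rather than with any step you have left out.
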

\begin{proof}
The proof follows from direct computation. 
\begin{align*}
    \Delta_f(e^{-f^2})&=(4e^{-f^2}\cdot f^2-2e^{-f^2})|\nabla f|^2-2e^{-f^2}\cdot f\cdot \Delta_f f\\
    &=2e^{-f^2}\bigg[(2f^2-1)|\nabla f|^2-f\bigg]\\
    &\leq 0,
\end{align*}
for $d(x)$ sufficiently large since $f(x)$ is equivalent to $d(x)$ when $d(x)$ is sufficiently large. And 
in the second line we use the fact that $\Delta_f f=1$ with respect to the normalization in (\ref{soleqn}). 
\end{proof}

Now we use the above lemmas to prove Theorem \ref{main2}. 
\begin{proof}
Using Lemma \ref{subharm} and Lemma \ref{supharm}, we can choose a large ball $B(R_0)$ such that, on $M\setminus B(R_0)$, 
\begin{equation}
    \Delta_f(e^{-1/R^2})\geq 0
\end{equation}
and 
\begin{equation}
    \Delta_f(e^{-f^2})\leq 0
\end{equation}
Pick a large $b>0$ such that on $\partial B(R_0)$, 
\begin{equation}
    S(x):=e^{-1/R^2}-be^{-f^2}<0. 
\end{equation}
By the fact that $\lim_{f(x)\rightarrow \infty}S(x)\rightarrow 0$ and 
\begin{equation}
    \Delta_f S(x)\geq 0, 
\end{equation}
we use maximum principle to get 
\begin{equation}\label{maineq}
    e^{-1/R^2}-be^{-f^2}\leq 0, 
\end{equation}
on $M\setminus B(R_0)$, which implies 
\[
R(x)\leq \frac{C}{f(x)}, 
\]
for $C>0$ on $M\setminus B(R_0)$. 
Using the equivalence of $f(x)$ and $d(x)$ in \cite{caochen}, 
\[
R(x)\leq \frac{C'}{d(x)}. 
\]
\end{proof}

\end{document}